\newtheorem{lem}{Lemma}
\newtheorem{thm}{Theorem}
\renewcommand{\a}{\alpha}
\renewcommand{\b}{\beta}
\newcommand{\f}{\varphi}
\newcommand{\ve}{\varepsilon}
\newcommand{\mc}{\lfloor m^c \rfloor}
\newcommand{\ec}{\lfloor c \rfloor}
\newcommand{\fc}{\{c\}}
\newcommand{\dc}{\|c\|}
\newcommand{\abs}[1]{\left| #1 \right|}
\DeclareMathOperator{\e}{\mathrm{e}}
\renewcommand{\mod}{\operatorname{mod}}
\begin{document}

\title[On a question of Luca and Schinzel]{On a question of Luca and Schinzel over Segal-Piatetski-Shapiro sequences}

\author[J.-M. Deshouillers, M. Hassani, M. Nasiri-Zare]{
{\sc Jean-Marc ~Deshouillers *},\\
{\sc  Mehdi Hassani}\\
and\\
{\sc Mohammad Nasiri-Zare}
}

\keywords{Segal-Piatetski-Shapiro sequences, Euler's totient function, distribution modulo 1.\\ * corresponding author}
\subjclass[2010]{11N64, 11K31, 11K38, 11B50}

\address{J.-M. D.
\newline Institut de Math\'ematiques de Bordeaux, UMR 5251
\newline Universit\'e de Bordeaux , Bordeaux INP et CNRS
\newline 33405 TALENCE Cedex
\newline FRANCE
\newline \textit{Email:} \texttt{jean-marc.deshouillers@math.u-bordeaux.fr}
}
\address{M. H.
\newline Department of Mathematics
\newline University of Zanjan
\newline University Blvd, 45137-38791, ZANJAN
\newline IRAN
\newline \textit{Email:} \texttt{mehdi.hassani@znu.ac.ir}
}
\address{M. N.-Z.
\newline Department of Mathematics
\newline University of Zanjan
\newline University Blvd, 45137-38791, ZANJAN
\newline IRAN
\newline \textit{Email:} \texttt{mnasirizare@znu.ac.ir}
}
\pagenumbering{arabic}

\date{}

\maketitle

\begin{abstract}
We extend to Segal-Piatetski-Shapiro sequences previous results on the Luca-Schinzel question. Namely, we prove that for any real $c$ larger than $1$ the sequence $(\sum_{m\le n} \varphi(\lfloor m^c \rfloor) /\lfloor m^c \rfloor)_n$ is dense modulo $1$, where $\varphi$ denotes Euler's totient function.  The main part of the proof consists in showing that when $R$  is a large integer, the sequence of the residues of $\lfloor m^c \rfloor$ modulo $R$ contains blocks of consecutive values which are in an arithmetic progression.
\end{abstract}

\section{Introduction}

At the Czech-Slovak Number Theory Conference in Smolenice in August 2007, F. Luca asked whether the sequences of arithmetic and geometric means of the first values of the Euler totient function are uniformly distributed modulo 1; A. Schinzel asked whether the weaker statement that those sequences are dense modulo 1 was known; this question was positively answered in \cite{DL}. This opened the way to extensions considering different multiplicative functions with constant mean value, or mean values over different sequences of integers as well as the original question of Luca in special cases (cf. the bibliography). One of the latest results is that of the first and third named authors of this paper
\begin{thm}[Theorem 1 of \cite{DNZ}]\label{thmDNZ}
Let $\f$ denote the Euler function and $G$ be a non constant polynomial with integral coefficients and taking positive values at positive arguments. The sequence 
$$
\left(\sum_{m\leq n}\frac{\f(G(m))}{G(m)}\right)_{n\ge 1}
$$ 
is dense modulo 1.
\end{thm}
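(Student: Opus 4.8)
Throughout set $f(m)=\f(G(m))/G(m)=\prod_{p\mid G(m)}(1-1/p)$ and $S(n)=\sum_{m\le n}f(m)$, so that $0<f(m)\le1$, with $f(m)<1$ once $G(m)>1$. By M\"obius inversion $f(m)=\sum_{d\mid G(m)}\mu(d)/d$, and counting the $m\le n$ with $d\mid G(m)$ gives $S(n)=\kappa n+o(n)$ where $\kappa=\prod_p\bigl(1-\rho(p)/p^2\bigr)>0$ and $\rho(p)=\#\{a\bmod p:G(a)\equiv0\}$. In particular $S(n)\to\infty$, and the ``walk'' $n\mapsto\{S(n)\}$ moves on the circle $\mathbb R/\mathbb Z$ with steps in $(0,1)$, winding around it infinitely often.

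The plan is to split the argument into a soft density criterion and a quantitative arithmetic input, the latter being the heart of the matter. First, because the steps lie in $(0,1)$ and $S(n)\to\infty$, I would reduce the theorem to the following: for every $\ve>0$ and every $N_0$ there are integers $a<b$ with $a>N_0$ such that the finite set $\{\{S(m)\}:a\le m\le b\}$ is $\ve$-dense in $[0,1)$. One cannot simply demand that \emph{all} increments $f(m)$, $a<m\le b$, be $<\ve$: averaging $-\log f(m)=\sum_{p\mid G(m)}-\log(1-1/p)$ over a window shows that, once $\ve$ is small, no block of consecutive integers has all of its $f(m)$ below $\ve$. The correct criterion is more delicate and must control the full distribution of the values $f(m)$ along the block together with the phases, relative to the running sum, at which the larger values fall; pinning this down is already a non-trivial point.

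For the arithmetic input I would fix $y$ large, put $R=\prod_{p\le y}p$, and factor $f(m)=f_{\le y}(m)f_{>y}(m)$, splitting the product over $p\mid G(m)$ at $p=y$. Since $\log G(m)\ll\log m$, one has $f_{>y}(m)=1+O((\log m)/y)$ uniformly for $m$ in a block around $m\asymp M$ provided $y/\log M\to\infty$, so the large-prime part is a negligible multiplicative error and everything reduces to $f_{\le y}(m)$, which depends only on $G(m)\bmod R$, hence only on $m\bmod R$. This is where the polynomial hypothesis enters, in the spirit of the statement quoted in the abstract for Segal--Piatetski--Shapiro sequences: $G(m)\bmod R$ is periodic in $m$, and for $G$ linear the residues $G(m)\bmod R$ of consecutive $m$ run through arithmetic progressions, while for general $G$ one uses the Chinese Remainder Theorem to place the block in a residue class $a\bmod R$ forcing the small primes dividing $G(m)$ along the block to consist of a fixed family (dividing $G(m)$ for \emph{every} $m$ of the block, hence contributing a constant factor $\gamma$ bounded away from $0$ and $1$) plus a thin, controllable set of ``medium'' primes $p\le L:=b-a$, each dividing $G(m)$ for about $\rho(p)L/p$ of the $m$'s; a sieve then describes the distribution of $\{f(m):a<m\le b\}$ precisely enough to meet the criterion above and to guarantee $S(b)-S(a)>1$. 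A handy exact relation is $f(qp^{k})=(\f(q)/q)(1-1/p)$ for $p$ prime, $p\nmid q$: crossing such an $m$ shifts the walk by $\f(q)/q-O(1/p)$ on the circle, an almost-rational shift whose $O(1/p)$ perturbation can be used to nudge the final position onto a prescribed target.

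Feeding the block so produced into the density criterion yields the theorem. The main obstacle is the arithmetic step: one must choose $L$, the location $M$ and the residue $a\bmod R$ of the block so as to satisfy two opposing demands --- the running sums must advance in amounts small and regular enough that they do not leap over an arc of length $\ve$, yet their total increase must exceed $1$, which forces $L$ large and thereby reawakens the fluctuation coming from the medium primes $p\le L$. Reconciling these is precisely the role of the quantitative study of the residues $G(m)\bmod R$, i.e.\ of the ``blocks of consecutive values lying in an arithmetic progression'' highlighted in the abstract.
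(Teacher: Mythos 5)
The overall skeleton you propose — reduce to finding a block $(a,b]$ where the walk $\{S(m)\}$ is $\ve$-dense, then use CRT to steer the residues $G(m)\bmod R$ along the block — is the same framework as the paper (which, for this particular theorem, quotes \cite{DNZ} and reprises the strategy in its Section 4). But your proposal then deliberately avoids the simplest version of the criterion, on the strength of an assertion that is false and that the actual proof directly contradicts.

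You claim ``one cannot simply demand that \emph{all} increments $f(m)$, $a<m\le b$, be $<\ve$,'' invoking an averaging of $-\log f(m)$ over the window. The averaging bound you sketch only controls the window-average of $-\log f$ when the starting point $a$ is at most sub-exponential in the block length $L=b-a$: the tail contribution from primes $p>L$ is $\ll\log(a+L)/L$, which is $O(1)$ for $a\ll e^{cL}$ but can be made arbitrarily large for $a$ super-exponential in $L$. That is precisely what the paper exploits. It fixes $H$, builds disjoint prime families $\mathcal P_1,\dots,\mathcal P_H$ (each prime $>H$) with $(\f(h)/h)(\f(P_h)/P_h)\in[2/H,3/H]$, sets $R=H!\prod_{H<p\le L}p$, and uses CRT to choose $r$ with $r\equiv -h\pmod{P_h}$ and $r\equiv0$ elsewhere. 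It then picks $m$ (of size roughly $R^{\b}$, \emph{exponential} in $L$) so that $G(m+h)\equiv r+h\pmod R$ for all $h\le H$; this forces $\gcd(G(m+h),R)=hP_h$, so every $f(m+h)$ lies in $[1/H,3/H]$ once the contribution of primes $>L$ is controlled (Lemma~\ref{largep}). All $H$ consecutive increments \emph{are} below $\ve=3/H$, the partial sums advance by at most $3/H$ at a time, and they advance by a total between $1$ and $3$ — so they sweep past every point of the circle within $3/H$. The ``more delicate criterion'' and the sieve over ``medium primes with a constant factor $\gamma$ bounded away from $0$ and $1$'' you describe are not needed and in fact point in the wrong direction: the construction makes the $\gcd(G(m+h),R)$ as \emph{large} as $H$ allows, with a different designated factor $hP_h$ for each $h$, not a common constant $\gamma$. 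You would have to drop the false obstruction and replace the sieve heuristic with the explicit CRT/prime-family construction to recover the theorem.
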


Here, we extend this result to Segal-Piatetski-Shapiro sequences. 

\begin{thm}\label{mainthm}
Let $\f$ denote the Euler function and $c$ be a real number number larger than $1$. The sequence 
$$
\left(\sum_{m\leq n}\frac{\f(\lfloor m^c \rfloor)}{\lfloor m^c \rfloor}\right)_{n\ge 1}
$$ 
is dense modulo 1.
\end{thm}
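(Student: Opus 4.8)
\emph{The plan.} The idea is to combine a soft density principle with a construction of long runs of consecutive $m$ along which $\mc$ is divisible by a fixed squarefree modulus. The principle is the one common to all results of this type: if $(a_m)_{m\ge 1}\subseteq[0,1]$ has divergent sum and $S(n)=\sum_{m\le n}a_m$, then $(S(n))_n$ is dense modulo $1$ provided that, for every $\ve>0$, there is a block of consecutive integers $u<m\le v$ with $a_m\le\ve$ for all of them and $\sum_{u<m\le v}a_m\ge 1$; indeed $S(u)\le S(u+1)\le\cdots\le S(v)$ then increases by steps $\le\ve$ over a range of length $\ge1$, so its fractional parts form an $\ve$-net of $\mathbb R/\mathbb Z$ and come within $\ve$ of any prescribed $\b$. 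I would apply this to $a_m=\f(\mc)/\mc$. Since $\f(N)/N\gg 1/\log\log N$ and $\mc\le m^c$, one has $a_m\gg 1/\log\log m$, so any block on which $a_m\le\ve$ whose length exceeds $C\log\log v$ ($C=C(c)$) automatically has sum $\ge1$. Fixing $k=k(\ve)$ with $\prod_{i\le k}(1-1/p_i)\le\ve$ and $R:=p_1\cdots p_k$, we have $a_m\le\ve$ whenever $R\mid\mc$, so Theorem~\ref{mainthm} would follow from: \emph{for every fixed squarefree $R\ge2$ there are arbitrarily large $m_0$ and an integer $L\ge C\log\log m_0$ with $R\mid\lfloor(m_0+j)^c\rfloor$ for all $0\le j\le L$} --- long runs of consecutive $m$ on which $\mc$ is the constant residue $0$ modulo $R$. (When $c$ is a positive integer no such run of length $\ge1$ exists, as $R\mid m^c\iff R\mid m$; but then $\mc=m^c$ is a polynomial and Theorem~\ref{thmDNZ} applies with $G(m)=m^c$, so I would assume henceforth $c\notin\mathbb Z$.)

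\emph{Producing the runs.} Put $I=\lceil c\rceil\ge2$, so $c-I\in(-1,0)$ and $I+1-c>1$. For $0\le j\le L$, Taylor's formula gives
\[
(m_0+j)^c=\sum_{i=0}^{I}\binom{c}{i}\,m_0^{\,c-i}\,j^{\,i}+E_j,
\qquad
|E_j|\le\Bigl|\binom{c}{I+1}\Bigr|\,L^{\,I+1}\,m_0^{\,c-I-1},
\]
so $|E_j|<\tfrac12$ as soon as $L\le m_0^{\delta}$ for a suitable $\delta=\delta(c)>0$, which is compatible with $L$ of size $C\log\log m_0$ once $m_0$ is large. Writing each coefficient as $\binom{c}{i}m_0^{\,c-i}/R=N_i+\theta_i$ with $N_i\in\mathbb Z$, $\theta_i\in[0,1)$, one gets $(m_0+j)^c=R\sum_i N_i j^i+\rho_j$ with $\rho_j=R\sum_i\theta_i j^i+E_j$, and hence $R\mid\lfloor(m_0+j)^c\rfloor$ for all $j\le L$ provided $0\le\rho_j<1$ throughout. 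A short computation shows this holds whenever $\theta_i\in[0,\eta]$ for $1\le i\le I$ and $\theta_0\in[\alpha,\eta]$, with $\eta:=(2R(I+1)L^I)^{-1}$ and $\alpha$ a small positive quantity (of size $\asymp L^{I+1}M^{\,c-I-1}/R$) whose sole role is to absorb the remainder $E_j$, which is negative since $\binom{c}{I+1}<0$; the window $[\alpha,\eta]$ is nonempty for $m_0$ large because $\alpha<\eta/2$ reduces to $L^{2I+1}M^{\,c-I-1}\ll1$. Moreover, for $i=I$ --- the only index in range with $c-i<0$ --- the condition $\theta_I\le\eta$ is automatic for large $m_0$, since $\binom{c}{I}m_0^{\,c-I}/R\to0$ while $\eta$ decays only polylogarithmically. (Imposing only the conditions for $2\le i\le\ec$ already forces the degree-$\ge2$ part of $j\mapsto\lfloor(m_0+j)^c\rfloor\bmod R$ to vanish, i.e.\ makes it a genuine arithmetic progression in $j$, as announced in the abstract; the extra conditions for $i\in\{0,1\}$ pin that progression down to the constant $0$.)

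\emph{The analytic input, and the main obstacle.} Everything is thereby reduced to: for every large $M$ there is an $m_0\in[M,2M]$ for which the $\ec+1$ fractional parts $\{\binom{c}{i}m_0^{\,c-i}/R\}$, $0\le i\le\ec$, lie simultaneously in prescribed subintervals of $[0,1)$ of length $\asymp\eta$. The exponents $c,c-1,\dots,c-\ec$ being positive and non-integral, the vector $m\mapsto\bigl(\{\binom{c}{i}m^{\,c-i}/R\}\bigr)_{0\le i\le\ec}$ is equidistributed in $[0,1)^{\ec+1}$ with a power-saving discrepancy; via the Erd\H os--Tur\'an inequality this reduces to exponential-sum bounds $\sum_{M\le m\le 2M}\e\bigl(\sum_i t_i\binom{c}{i}m^{\,c-i}/R\bigr)\ll M^{1-\rho}$ for nonzero integer vectors $(t_i)$, the phase being $\asymp m^{c-i_0}$ with $c-i_0>0$ non-integral --- precisely the kind of estimate, supplied by van der Corput's method, on which the theory of Segal--Piatetski--Shapiro sequences is built. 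Granting it, the number of admissible $m_0\in[M,2M]$ is $\gg M\eta^{\,\ec+1}\gg M(\log\log M)^{-(\ec+1)I}\to\infty$, so such $m_0$ exist for all large $M$, which would complete the proof. I expect the real difficulty --- and hence the main obstacle --- to be exactly the assembly of this multidimensional discrepancy estimate with enough uniformity to resolve the slowly shrinking windows of length $\eta$, together with the elementary but fiddly bookkeeping that keeps the interlocking parameters $L,\eta,\alpha,m_0$ consistent and rules out any wrap-around in the implication ``all $\theta_i$ small $\Longrightarrow R\mid\lfloor(m_0+j)^c\rfloor$''.
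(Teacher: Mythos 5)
Your proposal is correct in outline, but it follows a genuinely different route from the paper. The paper's core auxiliary result (Theorem~\ref{tlinpol}) produces, for a \emph{fixed} block length $H$ and all sufficiently large $R$, an $m\le R^{\b}$ with $\lfloor(m+h)^c\rfloor\equiv r+h\pmod R$ for $h=1,\dots,H$ and \emph{arbitrary} residue $r$; the residue is then chosen by the Chinese Remainder Theorem so that $\gcd(\lfloor(m+h)^c\rfloor,R)=hP_h$ for disjoint prime packets $\mathcal P_h$, which (together with a separate lemma, Lemma~\ref{largep}, bounding below the contribution of the primes exceeding $L$) traps each $\f(\lfloor(m+h)^c\rfloor)/\lfloor(m+h)^c\rfloor$ in $[1/H,3/H]$ and the block sum in $[1,3]$. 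You instead freeze the residue at $0$, force $R\mid\lfloor(m_0+j)^c\rfloor$ on an entire run of consecutive $j$, and recover the block-sum lower bound from the crude Mertens estimate $\f(N)/N\gg1/\log\log N$; this costs you a run length $L\gg\log\log m_0$ that grows with $m_0$, so the target boxes in the discrepancy step shrink like $(\log\log M)^{-I(I+1)}$ rather than staying of fixed size $\gg_{c,H}R^{-(\ec+1)}$ as in the paper. Both work, since the discrepancy saving is a power of $M$: the paper's choice buys a cleaner analytic step (boxes of size independent of $m$) at the price of the CRT setup and Lemma~\ref{largep}; yours avoids both of those elementary lemmas but has to carry the growing $L$ through the Taylor remainder, the wrap-around bookkeeping, and the Erd\H{o}s--Tur\'an--Koksma--Sz\"usz estimate. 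The two auxiliary statements are therefore not interchangeable --- the paper's gives fixed-length arithmetic-progression runs for every residue, yours needs unbounded (polylogarithmic) constant-residue runs --- but they rest on the same Taylor expansion and van der Corput machinery, and your version would need essentially the same Lemma~\ref{upper}-type bound with the added uniformity in $L$ made explicit.
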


The case when $c$ is an integer is but a special case of Theorem \ref{thmDNZ}. In this paper, we only consider the case when $c >1$ is not an integer, i.e. the case of Segal - Piatetski-Shapiro sequences.\\

A way to tackle the Luca-Schinzel question on a sequence $(a(n))_n$  is to obtain the arithmetical independence of some consecutive values of the sequence $(a(m))_m$ so that for each given $\ve$, one can find $H$ consecutive values of the sequence $((\f(a(m+h))/a(m+h))_m$ which are all less than $\ve$ but the sum of which is larger than $1$. A natural idea would be to expect a complete arithmetical independence of the sequence $(\lfloor(m+1)^c \rfloor, \ldots, \lfloor(m+H)^c \rfloor))_m$. This is not the case: it is proved in \cite{DDMS} that the sequence of the residues of $\mc$ modulo $k$ is normal for no $k  \ge 2$. It is even shown that in the sequence of the residues of $\mc$ modulo $k$, there are finite blocks of consecutive values which do not occur. \\

In Section \ref{slinpol}, we are however going to prove that $\lfloor (m+h)^c \rfloor$ can be locally approximated by a linear polynomial, more precisely, denoting by $\|c\|$ the so-called "distance to the nearest integer" of the real number $c$, we have
\begin{thm}\label{tlinpol}
Let $\b = 2^{(c+2)}(c+2)^2/\dc$ and let $H$ be a positive integer. If $R$ is sufficiently large an integer, for any residue $r$ modulo $R$, there exists $m \le R^{\b}$ such that
\begin{equation}\label{elinpol}
\forall h \in [1, H] \colon \lfloor (m+h)^c \rfloor \equiv r + h \; (\mod R).
\end{equation}
\end{thm}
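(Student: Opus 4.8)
We first describe the plan. The aim is to reduce the congruences \eqref{elinpol} to a Diophantine approximation condition involving only the finitely many quantities $m^{c}/R,\,m^{c-1}/R,\dots,m^{c-\ec}/R$, and then to produce a suitable $m$ by an iterative construction.

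\emph{Reduction.} Put $P=\ec$ and $\gamma=\fc$, so $\gamma\in(0,1)$ since $c\notin\mathbb Z$. Taylor's formula gives $(m+h)^{c}=Q_m(h)+E_h$ with $Q_m(h)=\sum_{j=0}^{P}\binom{c}{j}m^{c-j}h^{j}$ and $\abs{E_h}\le 2\abs{\binom{c}{P+1}}H^{P+1}m^{\gamma-1}$ for $m\ge 2(c+1)H$; hence $\abs{E_h}<\tfrac1{4R}$ as soon as $m\gg_{c}(RH^{P+1})^{1/(1-\gamma)}$. Since $\lfloor y\rfloor\equiv a\ (\mod R)$ is equivalent to $\{(y-a)/R\}\in[0,\tfrac1R)$, the congruences \eqref{elinpol} will follow if $\{(Q_m(h)-r-h)/R\}\in[\tfrac1{4R},\tfrac1{2R})$ for every $h\in[1,H]$. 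Expanding the polynomial $h\mapsto(Q_m(h)-r-h)/R$ in the basis $\binom{h}{0},\dots,\binom{h}{P}$ as $\sum_{i=0}^{P}b_i\binom{h}{i}$ and using $\binom{h}{i}\in\mathbb Z$ for $h\in\mathbb Z$, this in turn holds provided $b_1,\dots,b_P$ lie within $\asymp_{c}(RH^{P})^{-1}$ of $\mathbb Z$ and $b_0$ lies within the same distance of $\mathbb Z+\tfrac3{8R}$. Since the $b_i$ are affine functions of $m^{c}/R,\dots,m^{c-P}/R$ with coefficients depending only on $c$ (and a constant term depending on $r$), the whole problem reduces to exhibiting $m\le R^{\b}$ for which the vector $\rb{m^{c}/R,\dots,m^{c-P}/R}$ lies, modulo $\mathbb Z^{P+1}$, in a prescribed box of side $\asymp_{c}(RH^{P})^{-1}$.

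\emph{The coordinate $m^{\gamma}/R$ and the r\^ole of $\dc$.} The last coordinate $m^{c-P}/R=m^{\gamma}/R$ is where the hypothesis $c\notin\mathbb Z$ is used decisively. The sequence $(m^{\gamma})_m$ is equidistributed modulo $1$, and a van der Corput (equivalently, Euler--Maclaurin) estimate shows that, for $\lambda>0$, the number of $m\le M$ with $\{m^{\gamma}\}$ in a prescribed interval of length $\lambda$ equals $\lambda M+O_{c}\rb{M^{\gamma}+\lambda^{-\gamma/(1-\gamma)}}$; hence such $m$ already exist for $M\asymp_{c}\lambda^{-1/(1-\gamma)}\le\lambda^{-1/\dc}$, and in fact they form a union of blocks of consecutive integers whose lengths admit a lower bound increasing with $M$. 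Applying this with $\lambda\asymp_{c}(RH^{P})^{-1}$ is what produces the factor $1/\dc$ in $\b$.

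\emph{The remaining coordinates, and the main obstacle.} One then imposes the coordinates $m^{c-k}/R$ for $k=P-1,P-2,\dots,0$ in succession: given a block $B$ of consecutive integers on which the conditions of indices $P,\dots,k+1$ already hold, the map $m\mapsto m^{c-k}/R$ is monotone and smooth on $B$ with derivative of regular size (as $c-k\notin\{0,1\}$), so if $B$ is long enough that this quantity traverses a full period one may pass to a sub-block on which the $k$-th condition holds too; tracking, via a van der Corput estimate inside $B$, how the length demanded of $B$ at stage $k$ is governed by a power of the inverse length of $B$ at the previous stage, the $P+1$ stages compound so that the permissible exponent of $R$ roughly doubles at each stage, which together with polynomial losses from the $\binom{c}{j}$ and from the tolerance accounts for the factor $2^{c+2}(c+2)^{2}$, and choosing $m$ in the last block yields \eqref{elinpol} for $R$ large in terms of $c$ and $H$. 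The substantive difficulty — and the step I expect to be the real obstacle — is exactly this compounding: one must keep all the nested blocks non-empty while every tolerance shrinks like $R^{-1}$ and, simultaneously, the remainder $E_h$ remains below $\tfrac1{4R}$ \emph{uniformly} for $h\in[1,H]$, and show that the total loss, incurred through $P+1$ conditions all forced through the single parameter $m$, is still no more than a fixed power $R^{\b}$ with $\b$ depending on $c$ alone.
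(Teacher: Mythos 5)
Your reduction step is essentially the paper's Lemma~\ref{fracgame}: after Taylor expansion, the $H$ congruences \eqref{elinpol} are traded for the requirement that the vector $\rb{m^{c}/R,\, cm^{c-1}/R,\,\ldots,\,\gamma_c(\ec)m^{\fc}/R}$ land, modulo $\mathbb{Z}^{\ec+1}$, in a box of side $\ll_{c,H} 1/R$. Where the two arguments diverge is in how that box is shown to be hit by some $m\le R^{\b}$.

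The paper treats all $\ec+1$ coordinates \emph{simultaneously}: it bounds the discrepancy of the finite sequence of these vectors via the multidimensional Erd\H{o}s--Tur\'an--Koksma--Sz\"usz inequality (Lemma~\ref{ETKS}), estimating the relevant exponential sums uniformly with a van der Corput bound (Lemma~\ref{upper}); once the discrepancy is smaller than the volume of the target box, the desired $m$ must exist. You instead propose a nested--block construction, imposing the coordinates one at a time, from the slowest ($m^{\fc}/R$) to the fastest ($m^{c}/R$), passing each time to a sub-block of consecutive integers.

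This nesting does not work, and the obstruction is structural, not merely a bookkeeping loss. Write $\delta$ for the side of the target box, so $\delta\ll 1/R$, and $d_k$ for the derivative of $m\mapsto m^{c-k}/R$, so $d_k\asymp_c m^{c-k-1}/R$. The set of $m$ with $\{m^{c-k}/R\}$ in a fixed interval of length $\delta$ is a union of ``runs'' of length $\approx \delta/d_k$. For $k=\ec$ (your first coordinate) this is long, but for the coordinates with $c-k>1$ one has $d_k\gg 1/(RH^{\ec})\gtrsim\delta$ for \emph{every} integer $m\ge 2$, so those runs have length $<1$: the condition does not carve out blocks of consecutive integers at all. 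In particular your final sub-block has length $L_0\approx\delta/d_0\approx 1/\rb{c\,H^{\ec}m^{c-1}}<1$, hence need contain no integer. This is exactly why a monotonicity/full-period argument alone cannot finish; the coordinates with large derivative need an equidistribution or discrepancy estimate over the block, and controlling all such coordinates jointly is precisely what the ETKSz inequality supplies. You already flagged the compounding step as ``the real obstacle''; the point is that, for a pure nested-block scheme, it is fatal rather than delicate, and the quantitative discrepancy machinery of Section~\ref{slinpol} is what is actually needed.
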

In connection with \cite{DDMS}, we notice that this result implies that for any positive integer $H$ and any non integral $c$ larger than $1$, if $R$ is large enough, the sequence of the residues of $\lfloor n^c \rfloor$ modulo $R$ contains the block $(1, 2, \ldots, H)$.\\

We show how to use this result for proving Theorem \ref{mainthm} in Section \ref{smain}.

\section{General notation and results}\label{genlem}

\subsection{Notation}

For a real number $u$, we can write in a unique way $u = \lfloor u \rfloor + \{u\}$, where $\lfloor u \rfloor  \in \mathbb{Z}$ and $\{u\} \in [0, 1)$. We further let $\|u\| = \min (\{u\}, 1-\{u\}) = \min\{|u-m| \colon m \in \mathbb{Z}\} $.\\

For a $s$-tuple of real numbers $k=(k_1, \ldots, k_s)$, we let $\|k\|_{\infty}$ denote the maximum of the absolute values of its components. \\

For a real number $t$, we let $\e(t)=\exp(2\pi i t)$.\\

The letters $p$, $q$ denote prime numbers, the letter $h$ a non-negative integer.\\

We use Vinogradov's notation: if $f$ and $g$ are two real functions defined on some interval $[a, +\infty)$, $g$ taking positive values, we write $f \ll g$ for $f =O(g)$ and, when a parameter $u$ is involved, we write $f\ll_u g$ for $f = O_u(g)$. Similarly, we define $f \gg g$ (\emph{resp.}  $f \gg_u g$) if there exists a positive constant $C$, absolute, (\emph{resp.} which may depend on $u$) such that $f(x) \ge C g(x)$ on $[a, +\infty)$.\\

 For  $c$ real and $\ell$ a positive integer, we let $\binom{c}{\ell} = \frac{c(c-1)\cdots (c-\ell+1)}{\ell!}$; we further let $\binom{c}{0}=1$. By Taylor's expansion, we have, for $c >0, m>{~}0$ and $h \ge 0$
\begin{equation}\label{Taylor}
(m+h)^c = \sum_{\ell=0}^{\lfloor c \rfloor} \binom{c}{\ell} h^{\ell} m^{c-\ell} + r_c(m,h),
\end{equation}
with $0 \le r_c(m, h) \le \binom{c}{\lfloor c \rfloor +1} h^{\lfloor c \rfloor +1}m^{\{c\}-1} \le  h^{\lfloor c \rfloor +1}m^{\{c\}-1} $.

\subsection{The Erd\H{o}s-Tur\'{a}n-Koksma-Sz\"{u}sz inequality}\label{sETKS}

This inequality permits to give an upper bound for the discrepancy of a sequence of elements in $\mathbb{R}^s$ in terms of finitely many trigonometrical sums. The case when $s=1$ has been given by Erd\H{o}s and Tur\'{a}n in \cite{ET} and generalized to higher dimensions independently by Koksma \cite{K} and Sz\"{u}sz \cite{Sz}. We quote here the formulation given by Drmota and Tichy \cite{DT}. We first recall the definition of the discrepancy.\\

An \emph{interval} $I$ in $[0, 1)^s$ is a cartesian product $\prod_{1 \le i \le s} [a_i, b_i)$, with $0 \le a_i \le b_i \le 1$ for $1 \le i \le s$; its Lebesgue measure $\prod_{1 \le i \le s} (b_i-a_i)$ is denoted by $\lambda(I)$. For an interval $I \subset [0, 1)^s$, we denote by $\chi_I$ its indicator (also called characteristic) function.

The \emph{discrepancy} of a finite set $X = \{x_1, x_2, \ldots, x_N\}$ of elements of $\mathbb{R}^k$ is defined by
\begin{equation}\label{defdiscr}
D_N(X) = D_N(x_1, x_2, \ldots, x_N) = \sup_{I \subset [0, 1)^s} 
\abs{ \frac{1}{N} \sum_{n=1}^N \chi_I(\{x_n\}) - \lambda(I)}.
\end{equation}

\begin{lem}[Multidimensional Erd\H{o}s-Tur\'{a}n-Koksma-Sz\"{u}sz inequality]\label{ETKS}
Let $s \ge 1$, $X = \{x_1, x_2, \ldots, x_N\}$ be a finite set of elements of $\mathbb{R}^s$ and $K$ an arbitrary positive integer. We have
\begin{equation}\label{equETKS}
D_N(X)\le \left(\frac{3}{2}\right)^s\left(\frac{2}{K+1} + \sum_{0<\|k\|_{\infty}\le K} \frac{1}{r(k)}\abs{\frac{1}{N} \sum_{n=1}^N \e(k\cdot x_n)}\right),
\end{equation}
where, for $k = (k_1, k_2, \ldots, k_s) \in \mathbb{Z}^s$, we let $r(k) = \prod_{i=1}^s \max\{1, |k_i|\}$ and $u\cdot v$ denote the usual scalar product of two elements $u$ and $v$ in $\mathbb{R}^s$.
\end{lem}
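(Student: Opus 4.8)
The plan is to reproduce, in $s$ dimensions, the classical argument of Erd\H{o}s and Tur\'an, in the form due to Koksma and Sz\"usz and presented by Drmota and Tichy. It rests on two ingredients: a \emph{one-dimensional building block} --- trigonometric polynomials of degree at most $K$ that sandwich the indicator of an arc of $\mathbb{R}/\mathbb{Z}$, with explicit control on their Fourier coefficients --- together with a device for combining the $s$ coordinates. Since $k\in\mathbb{Z}^s$ gives $\e(k\cdot x_n)=\e(k\cdot\{x_n\})$, we may assume $x_1,\dots,x_N\in[0,1)^s$; write $S(k)=\frac1N\sum_{n=1}^N\e(k\cdot x_n)$ and let $B$ denote the right-hand side of \eqref{equETKS}. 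Since $\frac1N\sum_n\chi_I(x_n)$ and $\lambda(I)$ both lie in $[0,1]$, the inequality is trivial whenever $B\ge 1$, in particular whenever $K+1\le 2(3/2)^s$; hence we may assume $K+1>2(3/2)^s$, which makes $\eta:=\frac1{K+1}$ exponentially small in $s$, a fact used decisively below. It then suffices to prove $-B\le\frac1N\sum_n\chi_I(x_n)-\lambda(I)\le B$ for every interval $I=\prod_{i=1}^s[a_i,b_i)\subseteq[0,1)^s$ and then to take the supremum over $I$.

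I would first record the one-dimensional building block: for $0\le\alpha\le\beta\le1$ there are real trigonometric polynomials $F^{\pm}_{[\alpha,\beta)}(x)=\sum_{|h|\le K}\widehat{F^{\pm}}(h)\,\e(hx)$ with $F^{-}_{[\alpha,\beta)}\le\chi_{[\alpha,\beta)}\le F^{+}_{[\alpha,\beta)}$, with $\widehat{F^{\pm}}(0)=(\beta-\alpha)\pm\eta$, with $\bigl|\widehat{F^{\pm}}(h)\bigr|\le\eta+\min\bigl(\beta-\alpha,\frac1{\pi|h|}\bigr)$ for $h\neq0$, and with $F^{+}_{[\alpha,\beta)}-F^{-}_{[\alpha,\beta)}$ a non-negative polynomial of integral $2\eta$ (so that $\bigl|\widehat{F^{+}}(h)-\widehat{F^{-}}(h)\bigr|\le2\eta$ for all $h$). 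These are the Beurling--Selberg extremal majorant and minorant of an arc, in Vaaler's form, transported to the circle; a self-contained alternative is to convolve the indicator of a slightly dilated (resp.\ eroded) copy of $[\alpha,\beta)$ with the Fej\'er kernel $\frac1{K+1}\bigl(\sin\pi(K+1)x/\sin\pi x\bigr)^2$ and correct by a constant, the range $\beta-\alpha<2\eta$ being disposed of by $0\le\chi_{[\alpha,\beta)}\le1$. I would include whichever construction is shorter.

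For the passage to dimension $s$, the upper bound comes from the tensor majorant $U(y)=\prod_{i=1}^sF^{+}_{[a_i,b_i)}(y_i)$: each factor being $\ge\chi_{[a_i,b_i)}\ge0$ forces $U\ge\chi_I$, the spectrum of $U$ lies in $\{k:\|k\|_\infty\le K\}$, and for a tensor product the Fourier coefficients multiply, $\widehat U(k)=\prod_i\widehat{F^{+}_{[a_i,b_i)}}(k_i)$; applying $\frac1N\sum_n$ and isolating $k=0$ gives $\frac1N\sum_n\chi_I(x_n)-\lambda(I)\le\bigl(\widehat U(0)-\lambda(I)\bigr)+\sum_{0<\|k\|_\infty\le K}|\widehat U(k)|\,|S(k)|$. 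For the lower bound one cannot tensor the minorants (they take negative values); instead one uses the telescoped polynomial $L(y)=\prod_iu_i-\sum_i(u_i-\ell_i)\prod_{j\neq i}u_j$, with $u_i=F^{+}_{[a_i,b_i)}(y_i)$ and $\ell_i=F^{-}_{[a_i,b_i)}(y_i)$, which again has spectrum in $\{k:\|k\|_\infty\le K\}$ and satisfies $L\le\chi_I$ by the elementary inequality $\prod a_i\ge\prod u_i-\sum_i(u_i-\ell_i)\prod_{j\neq i}u_j$, valid for real numbers with $0\le a_i\le u_i$ and $\ell_i\le a_i$ (it follows from the standard product telescoping and $0\le\prod_{j<i}a_j\le\prod_{j<i}u_j$). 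This gives the symmetric lower estimate with $L$ in place of $U$, and one is left with two tasks: the constant-term bounds $\widehat U(0)-\lambda(I)\le 2\eta(3/2)^s$ and $\lambda(I)-\widehat L(0)\le 2\eta(3/2)^s$, and the estimation of $\sum_{0<\|k\|_\infty\le K}|\widehat U(k)|\,|S(k)|$ and of its $L$-analogue by $(3/2)^s\sum_{0<\|k\|_\infty\le K}\frac1{r(k)}|S(k)|$. Both reduce to expanding the products over coordinates and using the one-dimensional bounds together with $\eta|h|\le1$ for $|h|\le K$ and the slack $\frac1{\pi|h|}<\frac3{2|h|}$.

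The conceptual skeleton is completely standard; the one genuine obstacle is the bookkeeping that makes the constant come out exactly as $(3/2)^s$ with error term $\frac2{K+1}$. It is here that one must exploit the exponential smallness of $\eta=\frac1{K+1}$ secured above (without it, the combinatorial factors produced by expanding the products grow faster than $(3/2)^s$), treat by hand the degenerate intervals (some side of length $0$, or $I=[0,1)^s$), and verify that the telescoped \emph{minorant} $L$ --- not merely the majorant $U$, which is the easy direction --- obeys the constant-term and coefficient estimates; for the latter one uses that $\widehat L(k)-\prod_i\widehat{F^{-}}(k_i)$ is a sum of products each containing at least two factors $\widehat{F^{+}}(k_j)-\widehat{F^{-}}(k_j)$, together with $\bigl|\widehat{F^{+}}(h)-\widehat{F^{-}}(h)\bigr|\le2\eta$ and the cancellation this produces. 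For $s=1$ everything collapses to the proof of the original Erd\H{o}s--Tur\'an inequality (there $L=F^{-}_{[a_1,b_1)}$), and the multidimensional case adds nothing beyond the combinatorial care just described.
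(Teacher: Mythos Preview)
The paper does not prove this lemma at all: it is stated as a quotation from Drmota and Tichy \cite{DT} (see the sentence immediately preceding the statement in Section~\ref{sETKS}), and is used as a black box in Section~\ref{slinpol}. There is therefore no ``paper's own proof'' to compare against.

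That said, your sketch is the standard argument and is essentially the one given in \cite{DT}. The one-dimensional Selberg/Vaaler (or Fej\'er) polynomials, the tensor majorant $U=\prod_i F^{+}$, and the telescoped minorant $L=\prod_i u_i-\sum_i(u_i-\ell_i)\prod_{j\neq i}u_j$ are exactly the ingredients Koksma and Sz\"usz use; your verification that $L-\prod_i\ell_i$ expands into terms each carrying at least two factors $u_j-\ell_j$ is correct (for $s=2$ it is simply $-(u_1-\ell_1)(u_2-\ell_2)$, and in general it is $\sum_{|S|\ge 2}(-1)^{|S|+1}\prod_{i\in S}(u_i-\ell_i)\prod_{j\notin S}u_j$). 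The reduction to $K+1>2(3/2)^s$ is a clean way to dispose of small $K$, and with it the coefficient bookkeeping does close: for $k_i\neq 0$ one has $|k_i|\bigl(\eta+\tfrac{1}{\pi|k_i|}\bigr)<1+\tfrac{1}{\pi}<\tfrac32$ since $|k_i|\le K<1/\eta$, while for $k_i=0$ the factor is $\le 1+\eta\le\tfrac32$, giving $|\widehat U(k)|\le(3/2)^s/r(k)$; the constant-term estimate $(1+\eta)^s-1\le 2\eta(3/2)^s$ follows from $s\le(3/2)^s$ once $s\eta$ is small, which your hypothesis guarantees. So the outline is sound; what remains is only the routine write-up.
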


\subsection{Upper bound for some trigonometrical sums}\label{strigosum}

The following lemma provides us with an upper bound for the trigonometric sums useful in the application of the ETKSz inequality. The exponent in (\ref{eup}) is not very strong, but the uniformity in the statement makes it very convenient for our purpose. It is a rather straightforward consequence of the van der Corput inequalities.

\begin{lem}\label{upper}
Let $c$ be a real number in $(1, \infty)\backslash \mathbb{N}$.
There exist two constants $A$ and $B$ such that for any $N$ larger than $A$ and any non-zero $(\ec+1)$-tuple $(a_0, \ldots, a_{\ec})$ satisfying
\begin{equation}\label{eup}
\forall \ell \in [0, \ec]  \colon \text{ either } a_{\ell} = 0 \text{ or } N^{-\dc/2} \le |a_{\ell}| \le N^{\dc/2},
\end{equation}
one has
$$
S:= \left|\sum_{n = N+1}^{2N} \operatorname{\e} \left(\sum_{\ell = 0}^{\ec} a_{\ell} n^{c - \ell}\right) \right| \le B N^{1-\theta},
$$
where $\theta = 2^{-(c+2)} \|c\|$.
\end{lem}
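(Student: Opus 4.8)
The plan is to deduce Lemma~\ref{upper} from the classical van der Corput estimates for exponential sums with a real-valued phase that has a controlled higher derivative. Write $f(x) = \sum_{\ell=0}^{\ec} a_\ell x^{c-\ell}$ for $x \in [N+1, 2N]$, so that $S = \left|\sum_{N+1 \le n \le 2N} \e(f(n))\right|$. Since $c$ is not an integer, $c - \ell$ is never a non-negative integer, so for the index $k := \ec + 1$ one has $f^{(k)}(x) = \sum_{\ell=0}^{\ec} a_\ell \, \gamma_{c-\ell}^{*}(k)\, x^{c-\ell-k}$, where $\gamma_{c-\ell}^{*}(k) = (c-\ell)(c-\ell-1)\cdots(c-\ell-k+1) \ne 0$; note $c - \ell - k = \{c\} - 1 - \ell < 0$ throughout, so each summand is $\asymp_c |a_\ell| x^{\{c\}-1-\ell}$. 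The key point is that these exponents $\{c\}-1-\ell$ are \emph{distinct} for distinct $\ell$, so on the dyadic range $x \asymp N$ there is essentially no cancellation between the terms of $f^{(k)}$: if $a_{\ell_0}$ is the nonzero coefficient with \emph{smallest} index $\ell_0$, its contribution $\asymp_c |a_{\ell_0}| N^{\{c\}-1-\ell_0}$ dominates the rest provided $N$ is large (the ratio of the $\ell$-th term to the $\ell_0$-th is $\ll_c (|a_\ell|/|a_{\ell_0}|) N^{\ell_0 - \ell} \le N^{\dc} N^{-1} \to 0$ by the hypothesis (\ref{eup})). Hence $|f^{(k)}(x)| \asymp_c |a_{\ell_0}| N^{\{c\}-1-\ell_0} =: \lambda$ for all $x \in [N+1, 2N]$, uniformly.

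Next I would feed this into the $k$-th van der Corput bound: if $|f^{(k)}| \asymp \lambda$ on a dyadic interval of length $N$, then
\begin{equation}\label{evdc}
\left|\sum_{N+1 \le n \le 2N} \e(f(n))\right| \ll_k N\left(\lambda^{1/(2^k-2)} + N^{-1/(2^k-2)}\lambda^{-1/(2^k-2)}\right)
\end{equation}
(the standard form of the higher-order van der Corput inequality, $k \ge 2$; for $k=2$ one uses the familiar $N\lambda^{1/2} + \lambda^{-1/2}$ version). To make both terms small we need lower and upper bounds on $\lambda$. From (\ref{eup}), $|a_{\ell_0}| \ge N^{-\dc/2}$ and $\{c\}-1-\ell_0 \ge \{c\} - 1 - \ec = \{c\} - c$, so $\lambda \gg_c N^{-\dc/2} N^{\{c\}-c} = N^{\{c\}-c-\dc/2}$; and $|a_{\ell_0}| \le N^{\dc/2}$, $\{c\}-1-\ell_0 \le \{c\}-1$, so $\lambda \ll_c N^{\dc/2}N^{\{c\}-1} = N^{\{c\}-1+\dc/2}$. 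Plugging these into (\ref{evdc}) and taking $k = \ec+1 \le c+1$, each of the two terms is bounded by $N^{1-\eta}$ with $\eta$ a positive constant depending only on $c$; a crude bookkeeping of the exponents — using $\|c\| \le \{c\} \le 1$ and $2^k - 2 \le 2^{c+1} \le 2^{c+2}/2$ — yields an admissible value at least as good as $\theta = 2^{-(c+2)}\|c\|$, which is all that is claimed. The constants $A$ (threshold on $N$) and $B$ then come out of the van der Corput constant and the implied constants in the $\asymp_c$ relations.

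The main obstacle is \emph{isolating the dominant term of $f^{(k)}$ uniformly over all admissible coefficient tuples}: a priori several $a_\ell$ could be comparable in size, and one must check that the spread of the exponents $N^{\{c\}-1-\ell}$ always beats the spread $N^{\pm \dc/2}$ allowed for the coefficients. This is exactly where the hypothesis (\ref{eup}) — that each nonzero $a_\ell$ lies in the window $[N^{-\dc/2}, N^{\dc/2}]$ — is used: it forces $|a_\ell|/|a_{\ell_0}| \le N^{\dc} = o(N^{\ell - \ell_0})$ whenever $\ell > \ell_0$, so the lowest-index term wins by a factor tending to infinity, and the estimate $|f^{(k)}(x)| \asymp_c \lambda$ holds with the \emph{same} implied constants for every tuple once $N$ exceeds an $A$ depending only on $c$. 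A secondary but purely mechanical point is to organize the final exponent arithmetic so that the loss from using $k = \ec+1$ (rather than an optimized $k$) and from the weak window in (\ref{eup}) still leaves $\theta = 2^{-(c+2)}\|c\|$ as a valid — if far from optimal — exponent; as the authors note, strength of the exponent is irrelevant here, only the uniformity matters.
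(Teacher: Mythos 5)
Your proposal correctly identifies the central idea — that the hypothesis (\ref{eup}) forces the term with the \emph{smallest} non-zero index $\ell_0$ to dominate the relevant derivative of the phase, because the spread of coefficients $N^{\pm\dc/2}$ is smaller than the spread of powers $N^{-(\ell-\ell_0)}$. However, there is a genuine gap in the way you feed this into van der Corput: you fix the derivative order at $k=\ec+1$ for \emph{all} admissible tuples, and this fails to give a non-trivial estimate when $\ell_0$ is large. Concretely, when $\ell_0=\ec$ (only $a_{\ec}\neq 0$) the derivative magnitude is $\lambda\asymp |a_{\ec}|N^{\{c\}-1-\ec}$, which can be as small as $N^{-\ec-\dc/2}$. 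Plugging this $\lambda$ into the $k$-th derivative test with $k=\ec+1$, $Q=2^{\ec-1}$, the "small $\lambda$" term has exponent
$$
1-\frac{2}{Q}+\frac{1}{Q^2}+\frac{\ec+\dc/2}{2Q},
$$
and this drops below $1$ only if $\ec+\dc/2<4-2/Q$. For $\ec\ge 4$ (i.e.\ $c>4$) this inequality fails for every $\dc$, so the bound you get is worse than trivial; the same phenomenon occurs, with a weaker margin, for other $\ell_0$ close to $\ec$. The lemma must hold for all non-integral $c>1$, so this is a real obstruction, not a matter of losing constants.

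The repair — which is what the paper does — is to adapt the derivative order to $\ell_0$: use the $(q+2)$-th derivative test with $q=\ec-\ell_0-1$ (so derivative order $\ec-\ell_0+1$), which makes the dominant term of the relevant derivative behave like $|a_{\ell_0}|N^{\{c\}-1}$ regardless of $\ell_0$; this keeps $\lambda$ in a window $N^{-O(\dc)}$ where the van der Corput terms are genuinely sub-linear. This choice requires $q\ge 0$, i.e.\ $\ell_0\le\ec-1$, so the remaining case $\ell_0=\ec$ (a single monomial $a_{\ec}n^{\{c\}}$) has to be handled separately; there the correct tool is a first-derivative test (Kusmin--Landau), since $|f'|\asymp |a_{\ec}|N^{\{c\}-1}$ lies strictly between $N^{-O(\dc)}$ and $N^{-\dc/2}$, giving $S\ll N^{1-\dc/2}\le N^{1-\theta}$ directly. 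Your observation that the dominant-term argument is uniform over admissible tuples is correct and is the same one the paper uses; what is missing is the dependence of the chosen derivative order on $\ell_0$, without which the final exponent bookkeeping breaks down for $c>4$.
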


\begin{proof}
We start with the case when all the coefficients are $0$ except $a_{\ec}$. In this case, we use Kusmin-Landau inequality (cf. Theorem 2.1 of \cite{GK}). If $N$ is large enough, we have, on $(N, 2N]$, $\|\frac{d}{dt}(a_{\ec}t^{\fc})\|\gg N^{-\dc/2 + \fc-1}$ and thus $S \ll N^{1+\dc/2 - \fc} \ll N^{1-\dc/2} \le N^{1-\theta}$.\\

For the other cases, we use the following lemma, due to van der Corput, which one can find as a combination of Theorem 2.2 (for $q=0$) and Theorem 2.8 (for $q \ge 1$) of \cite{GK}.
\begin{lem}\label{vdC}
There exists an absolute constant $C$ satisfying the following. Let $q$ be a non-negative integer, $f$ a real valued function defined on $[N, 2N]$, with $q+2$ continuous derivatives satisfying
$$
\lambda \le |f^{(q+2)}(t)|\le \a \lambda,
$$
for some $\lambda >0$ and some $\a \ge 1$. Then, one has
\begin{equation}\label{evdC}
 \sum_{n=N+1}^{2N} \e(f(n)) \le C\left(N(\a^2\lambda)^{\frac{1}{4Q-2}} +N^{1-\frac{1}{2Q}}\a^{\frac{1}{2Q}}+N^{(\frac{Q-1}{Q})^2}\lambda^{\frac{-1}{2Q}}\right),
\end{equation}
where $Q=2^q$.
\end{lem}
Let $\ell_0$ be the smallest integer $\ell$ such that $a_{\ell}$ is non zero. The function of interest is thus $f(n) = \sum_{\ell = \ell_0}^{\ec} a_{\ell} n^{c - \ell}$. Since the non zero coefficients $a_{\ell}$, including $a_{\ell_0}$ are small powers of $N$, the term $a_{\ell_0}n^{c-\ell_0}$ dominates $f$ and its derivatives dominate the derivatives of $f$. In the application of Lemma \ref{vdC}, we take $q = \ec - \ell_0 -1$; we first notice that $q$ is non-negative (we already treated the case $\ell_0 = \ec$); we also notice that $q \le \ec-1 \le c-1$ so that $1 \le Q\le 2^{c-1}$. Furthermore, $\a$ depends only of $c$, and (since our constants may depend on $c$), it plays no role in our application of (\ref{evdC}); moreover $\lambda$ is of the order $|a_{\ell_0}|N^{\fc -1}$, so that
$$
\lambda \ll_c N^{\dc/2 + \fc -1} \ll_c N^{- \|c\|/2} \text{ and } \lambda^{-1} \ll_c N^{\dc/2 +1 -\fc} \ll_c N^{3 \|c\|/2}.
$$
By (\ref{evdC}), we have
$$
S\ll_c N^{1-\frac{ \|c\|}{8Q-4}} + N^{1-\frac{2}{Q}} +N^{(\frac{Q-1}{Q})^2+\frac{3\|c\|}{4Q}}.
$$
We easily notice that each of the three terms in RHS of the last relation is less than $N^{1-\theta}$.
\end{proof}

\subsection{Contribution of large primes to $\f(n)/n$}\label{slargeprimes}
\begin{lem}\label{largep}
Let $\a >0$. For any $C$ in $(0, \min(\a, 1))$ there exists $N$ such that for $n \ge N$, one has
\begin{equation}\label{elargep}
\prod_{\substack{p|n\\p\ge (\log n)^{\a}}} \left(1 - \frac{1}{p}\right) \ge C.
\end{equation}
\end{lem}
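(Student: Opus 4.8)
The plan is to reduce the product to a worst-case arrangement of prime divisors of $n$ and then read off the constant from Mertens' third theorem. Write $T = (\log n)^{\a}$. If $n$ has no prime factor $\ge T$, the product in (\ref{elargep}) is empty and equals $1$, so we may assume $n$ has at least one, and let $p_1 < p_2 < \cdots < p_k$ be its prime divisors that are $\ge T$. Writing $q_i$ for the $i$-th prime which is $\ge T$, one has $p_i \ge q_i$, and since $t \mapsto 1-1/t$ is increasing,
$$
\prod_{\substack{p|n\\ p\ge T}}\left(1-\frac1p\right)\ \ge\ \prod_{i=1}^{k}\left(1-\frac{1}{q_i}\right).
$$
Moreover $q_1\cdots q_k \le p_1\cdots p_k \le n$, so $k \le K$, where $K := \max\{J\ge 1 : q_1\cdots q_J \le n\}$ (this set contains $1$ since $q_1\le p_1\le n$). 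As each factor is smaller than $1$, extending the product only decreases it:
$$
\prod_{i=1}^{k}\left(1-\frac{1}{q_i}\right)\ \ge\ \prod_{i=1}^{K}\left(1-\frac{1}{q_i}\right)\ =\ \prod_{T\le p\le q_K}\left(1-\frac1p\right),
$$
so it suffices to bound this last product from below, uniformly in $n$.

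First I would control $q_K$ using Chebyshev's estimates $\vartheta(x)\asymp x$. From $q_1\cdots q_K\le n$,
$$
\vartheta(q_K)=\sum_{p<T}\log p+\sum_{T\le p\le q_K}\log p\ \le\ \vartheta(T)+\log(q_1\cdots q_K)\ \le\ \vartheta(T)+\log n\ \ll\ (\log n)^{\a}+\log n,
$$
whence $q_K\ll(\log n)^{\max(1,\a)}$ and $\log q_K\le\max(1,\a)\log\log n+O(1)$ for $n$ large.

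Then I would invoke Mertens' third theorem $\prod_{p\le x}(1-1/p)=e^{-\gamma}(\log x)^{-1}(1+O(1/\log x))$. Since $T\to\infty$ with $n$, applying it to both factors gives, for $n$ large,
$$
\prod_{T\le p\le q_K}\left(1-\frac1p\right)=\frac{\prod_{p\le q_K}(1-1/p)}{\prod_{p<T}(1-1/p)}=\frac{\log T}{\log q_K}\left(1+O\Big(\frac{1}{\log\log n}\Big)\right).
$$
With $\log T=\a\log\log n$ and the bound on $\log q_K$, the right-hand side is at least $\dfrac{\a\log\log n}{\max(1,\a)\log\log n+O(1)}\big(1+O(\tfrac{1}{\log\log n})\big)$, which tends to $\a/\max(1,\a)=\min(\a,1)$ as $n\to\infty$. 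Since $C<\min(\a,1)$, the left-hand side of (\ref{elargep}) exceeds $C$ for all $n$ beyond some $N=N(\a,C)$, which is the claim.

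The one step that needs care is the reduction at the start: a crude route — bounding the number of prime divisors of $n$ that are $\ge T$ by $\omega(n)\ll\log n/\log\log n$ and each corresponding factor by $1-T^{-1}$ — yields a lower bound tending to $0$ as soon as $\a<1$, so one really must pack these primes into the block $[T,q_K]$ and harvest the cancellation in Mertens' product, which is exactly what produces the constant $\min(\a,1)$. When $\a\ge 1$ (the range used when this lemma is applied) one has $\min(\a,1)=1$ and even the crude argument would do.
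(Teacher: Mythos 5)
Your proof is correct and follows essentially the same route as the paper: you pack the prime divisors of $n$ that exceed $T=(\log n)^{\a}$ into the smallest admissible primes $q_1<\cdots<q_K$, extend to the full block $[T,q_K]$, control $q_K$ via Chebyshev, and harvest Mertens' theorem to get the limit $\min(\a,1)$. The paper does exactly this (with $p_1,\ldots,p_r$ in place of $q_1,\ldots,q_K$ and $p_r\sim\log n$ via $\vartheta$); the only cosmetic difference is that the paper first reduces to $\a\in(0,1)$ by monotonicity of the left-hand side in $\a$, whereas you handle all $\a>0$ at once through $\max(1,\a)$, and your remark that the crude $\omega(n)\ll\log n/\log\log n$ bound fails when $\a<1$ correctly identifies why the packing step is indispensable.
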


\begin{proof} 
This is a simple consequence of classical results on the distribution of prime numbers. Explicit value for $N$ in terms of $\a$ and $C$, which is not relevant here, can be obtained from the evaluations given in \cite{D}.\\
Since the LHS of (\ref{elargep}) is an increasing function of $\a$, it enough to consider the case when $\a \in (0,1)$. 
Let $n$ be sufficiently large an integer. We denote by $p_0, p_1, \ldots, p_r, p_{r+1}$ the consecutive prime numbers such that
\begin{equation}\label{consprime}
p_0 <\log^{\a} n\le p_1< \cdots <p_{r+1}\text{ and } p_1 \cdots p_r \le n < p_1 \cdots p_{r+1}.
\end{equation}
By a direct lower bound and by Mertens Theorem, for $n$ large enough, we have 
$$
\prod_{\substack{p|n\\p\ge (\log n)^{\a}}} \left(1 - \frac{1}{p}\right) \ge \prod_{j=1}^r \left(1 - \frac{1}{p_j}\right) = (1 + o(1))\frac{\log p_0}{\log p_r}.
$$
From the first part of (\ref{consprime}), we have $p_0 =(1+o(1)) \log^{\a} n$. Taking logarithms in the second part of (\ref{consprime}), we have $\log n =(1+o(1)) (\theta(p_r)-\theta(p_0))$. The last relations we obtained imply $p_r =(1 + o(1)) \log n$ and so we have
$$
\log p_0 / \log p_r =(1+o(1)) \a \log \log n / \log \log n = \a +o(1).
$$
Thus, for any $0<C < \min(\a,1)$, Relation (\ref{elargep}) holds true when $n$ is large enough.
\end{proof}

\subsection{Elementary relations concerning integral and fractional parts}
For convenience, we recall here some elementary relations.\\

Let $\ell$ denote a non-negative integer and $x, x_1,\ldots,x_{\ell}$ real numbers. We have
\begin{equation}\label{multfrac}
\lfloor \ell x\rfloor \ge \ell \lfloor x \rfloor \; \text{ and } \;\{\ell x\} \le \ell \{x\},
\end{equation}
\begin{equation}\label{sumint}
\text{if } \{x_1\}+\cdots +\{x_{\ell}\} <1, \; \text{ then } \; \lfloor x_1 + \cdots +x_{\ell} \rfloor= \lfloor x_1\rfloor  \cdots + \lfloor x_{\ell} \rfloor,
\end{equation}
\begin{equation}\label{mult}
\text{if } \; \ell \{x\} < 1, \; \text{ then }\; \lfloor \ell x\rfloor = \ell \lfloor x \rfloor.
\end{equation}
\textit{Proof of those relations.}
We multiply $x = \lfloor x \rfloor + \{x\}$ by $\ell$, which leads to 
\begin{alignat*}{2}
\ell x  = \,&\ell\lfloor x \rfloor + \ell \{x\}\\
= \,&\ell \lfloor x \rfloor + \lfloor \ell \{x\}\rfloor + \{\ell \{x\}\}
\end{alignat*}
which leads in turn to $\lfloor \ell x \rfloor = \ell \lfloor x \rfloor + \lfloor \ell \{x\}\rfloor $ and $\{\ell x\} = \{\ell\{x\}\}= \ell\{x\} - \lfloor \ell\{x\}\rfloor$, whence (\ref{multfrac}).\\
We have
$$
x_1+\cdots + x_{\ell} =( \lfloor x_1\rfloor + \cdots +  \lfloor x_{\ell}\rfloor) + (\{x_1\} + \cdots +\{x_{\ell}\});
$$
The first term in the RHS is an integer and the second term is in $[0, 1)$, which implies (\ref{sumint}). Relation (\ref{mult}) is just the special case of (\ref{sumint}) in which all the summands are equal.
\qed

\begin{lem}\label{congfrac}
Let $R$ and $r$ be integers with $0 \le r < R$, $u$ be a real number in $[0, 1]$, and $x$ be a real number such that
$$
\left\{\frac{x}{R}\right\} \in \left[\frac{r}{R}, \frac{r+u}{R}\right).
$$
We have
$$
\lfloor x \rfloor \equiv r\, (\mod R) \; \text{ and }\; \{x\} < u.
$$
\end{lem}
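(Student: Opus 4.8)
The statement is elementary, and the plan is simply to unwind the definition of the fractional part. First I would multiply the identity $\frac{x}{R} = \lfloor x/R \rfloor + \{x/R\}$ by $R$, obtaining
\[
x = R\left\lfloor \frac{x}{R}\right\rfloor + R\left\{\frac{x}{R}\right\},
\]
so that $x$ is the integer $R\lfloor x/R\rfloor$ plus the real number $R\{x/R\}$, which by hypothesis lies in the interval $[r, r+u)$.

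Next I would set $\theta = R\{x/R\} - r$; by hypothesis $\theta \in [0, u)$, and since $u \le 1$ this yields $\theta \in [0, 1)$. Rewriting the previous display as $x = \left(R\lfloor x/R\rfloor + r\right) + \theta$ exhibits $x$ as an integer plus an element of $[0,1)$; by the uniqueness of the decomposition $x = \lfloor x \rfloor + \{x\}$ recalled in Section \ref{genlem}, this forces $\lfloor x \rfloor = R\lfloor x/R\rfloor + r$ and $\{x\} = \theta$.

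The two conclusions then follow at once: $\lfloor x \rfloor = R\lfloor x/R\rfloor + r \equiv r \; (\mod R)$, and $\{x\} = \theta < u$. There is no genuine obstacle in the argument; the only point worth noting is that the hypothesis $u \le 1$ is exactly what guarantees $\theta < 1$, so that $R\lfloor x/R\rfloor + r$ and $\theta$ really are the integer and fractional parts of $x$.
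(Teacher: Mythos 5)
Your proof is correct and is essentially the same as the paper's: the paper introduces an integer $t$ with $x/R - t \in [r/R, (r+u)/R)$ and deduces $tR + r \le x < tR + r + u$, which is exactly your decomposition with $t = \lfloor x/R \rfloor$ and $\theta = R\{x/R\} - r$. You spell out the uniqueness of the integer-plus-fractional decomposition a bit more explicitly, but the substance is identical.
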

\begin{proof}
The hypothesis implies that there exists a rational integer $t$ such that
$$
\frac{x}{R} -t \in \left[\frac{r}{R}, \frac{r+u}{R}\right).
$$
This implies
$$
tR+r \le x < tR+r+u, \; \text{ whence } \; \lfloor x \rfloor = tR + r \; \text{ and } \; 0 \le \{x\} < u.
$$
\end{proof}

\section{Local approximation by a linear polynomial}\label{slinpol}

In this section, we keep the notation of Theorem \ref{slinpol}, assuming, without loss of generality that $0\le r < R$.

\subsection{A lemma on fractional parts}

\begin{lem}\label{fracgame}
Let $m$ be an integer satisfying
\begin{enumerate}[label=(\roman*)]
\item \quad $m^{1-\fc}> 4(cH)^{{c+1}}$,
\item \quad $\left\{\frac{m^c}{R}\right\} \in \left[\frac{r}{R},  \frac{r}{R} +\frac{1}{4R}\right),   $
\item \quad $\left\{\frac{cm^{c-1}}{R} \right\} \in \left[\frac{1}{R},\frac{1}{R}+\frac{1}{4RH}\right), $
\item \quad $\forall\ell  \in [2, \ec] \colon \left\{\frac{\binom{c}{\ell} m^{c-\ell}}{R}\right\} \in \left[0, \frac{1}{4cRH^{\ell}}\right)$.
\end{enumerate}
Then, for any $h$ in $[1, H]$, we have
$$
\lfloor (m+h)^c \rfloor \equiv r +h\; (\mod R).
$$
\end{lem}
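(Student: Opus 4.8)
The plan is to start from Taylor's expansion \eqref{Taylor} for $(m+h)^c$ and show that, under hypotheses (i)--(iv), the fractional part of $(m+h)^c/R$ lands in the interval $\left[\frac{r+h}{R}, \frac{r+h+1}{R}\right)$ after reduction, so that Lemma \ref{congfrac} (with $u=1$) gives the desired congruence. First I would write
$$
\frac{(m+h)^c}{R} = \sum_{\ell=0}^{\ec} \gamma_c(\ell) h^\ell \frac{m^{c-\ell}}{R} + \frac{r_c(m,h)}{R},
$$
and estimate each piece modulo $1$. The $\ell=0$ term contributes $m^c/R$, whose fractional part is in $\left[\frac{r}{R}, \frac{r}{R}+\frac{1}{4R}\right)$ by (ii). The $\ell=1$ term contributes $h\cdot cm^{c-1}/R$; by (iii), $\{cm^{c-1}/R\}$ is within $\frac{1}{4RH}$ of $\frac1R$, so multiplying by $h\le H$ and using \eqref{multfrac}, the quantity $h\cdot cm^{c-1}/R$ is, modulo $1$, within $\frac{h}{4RH}\le\frac1{4R}$ of $\frac{h}{R}$ (one must check that $h\{cm^{c-1}/R\}<1$, which holds since $h(\frac1R+\frac1{4RH})\le H\cdot\frac2R<1$ for $R$ large, so \eqref{mult}-type reasoning applies to split off the integer part $h$ cleanly). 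For $\ell\ge 2$, by (iv) each $\{\gamma_c(\ell)m^{c-\ell}/R\}$ is less than $\frac{1}{4cRH^\ell}$, so $h^\ell\{\gamma_c(\ell)m^{c-\ell}/R\}<\frac{H^\ell}{4cRH^\ell}=\frac{1}{4cR}$, and summing over the at most $\ec\le c$ values of $\ell$ gives a total contribution below $\frac{1}{4R}$.

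Next I would handle the remainder term: by \eqref{Taylor}, $0\le r_c(m,h)\le h^{\ec+1}m^{\fc-1}\le (cH)^{c+1}m^{\fc-1}$, and hypothesis (i) states $m^{1-\fc}>4(cH)^{c+1}$, so $r_c(m,h)<\frac14$, hence $r_c(m,h)/R<\frac1{4R}$. Collecting the four contributions, the fractional part of $(m+h)^c/R$ differs from $\frac{r}{R}+\frac{h}{R}$ by at most $\frac1{4R}+\frac1{4R}+\frac1{4R}+\frac1{4R}=\frac1R$ in the appropriate one-sided sense; more carefully, writing $(m+h)^c/R = N + \frac{r+h}{R} + \delta$ with $N\in\mathbb Z$ (absorbing the integer parts $h$ from the $\ell=1$ term and the integer parts from each $\gamma_c(\ell)h^\ell\lfloor m^{c-\ell}\rfloor/R$ combination), one gets $0\le\delta<\frac1R$ from the one-sided lower bounds in (ii)--(iv) together with the remainder. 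Thus $\{(m+h)^c/R\}\in\left[\frac{r+h}{R},\frac{r+h+1}{R}\right)$, provided $r+h<R$; for $R$ large relative to $H$ this is automatic once we note $r<R$ and we may also reduce $r+h$ modulo $R$ without affecting the argument since everything is periodic. Lemma \ref{congfrac} then yields $\lfloor(m+h)^c\rfloor\equiv r+h\pmod R$.

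The main obstacle, and the place requiring genuine care rather than routine estimation, is the bookkeeping of integer parts when one multiplies by $h$ and sums the $\ec+1$ terms: one needs the fractional parts to add up to strictly less than $1$ so that \eqref{sumint} applies and the floor of the sum equals the sum of the floors, and simultaneously one must verify that the ``carried'' integers assemble exactly into $h$ plus a multiple of $R$ (the $\ell=1$ term is designed, via the $\frac1R$ offset in (iii), to produce precisely the $+h$). The constants $\frac14$, $\frac1{4H}$, $\frac1{4H^\ell}$ in the hypotheses are chosen exactly so that the four error budgets each stay below $\frac1{4R}$ and their sum stays below $\frac1R$; I would lay out the inequality $\{x_0\}+\{hx_1\}+\sum_{\ell\ge2}\{h^\ell x_\ell\}+r_c(m,h)<1$ explicitly using \eqref{multfrac} and check it term by term, then invoke \eqref{sumint} and finally \eqref{congfrac}.
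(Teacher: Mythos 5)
Your plan --- Taylor-expand $(m+h)^c$, control the fractional part of each piece, finish with Lemma~\ref{congfrac} --- is the right framework, but doing the bookkeeping at the $/R$ scale creates two difficulties that you do not fully resolve. First, to split $h\cdot cm^{c-1}/R$ into integer plus fractional part via \eqref{mult} you need $h\{cm^{c-1}/R\}<1$, and since $\{cm^{c-1}/R\}\approx 1/R$ this amounts to a hidden hypothesis of the form $R>H$, which the lemma does not assume. Second, and more seriously, you need the sum of the scaled fractional parts to land in $[(r+h)/R,(r+h+1)/R)\subset[0,1)$, hence $r+h<R$, and you claim this is ``automatic for $R$ large relative to $H$ once we note $r<R$.'' That claim is false: $r$ ranges over all residues, so for instance $r=R-1$, $h=1$ gives $r+h=R$ for every $R$. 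Your fallback --- ``reduce $r+h$ modulo $R$ \ldots since everything is periodic'' --- is asserted, not proved; what is actually true (and would need to be spelled out) is that when $r+h\ge R$ the scaled sum lies in $[1,2)$, so $\{(m+h)^c/R\}\in[(r+h-R)/R,(r+h-R+1)/R)$ and $r+h-R\equiv r+h\pmod R$.

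The paper sidesteps both problems by applying Lemma~\ref{congfrac} \emph{before} multiplying by $h$ or summing, and at the unscaled level. Hypothesis (ii) gives $\lfloor m^c\rfloor\equiv r\pmod R$ together with $\{m^c\}<1/4$; hypothesis (iii) gives $\lfloor cm^{c-1}\rfloor\equiv 1\pmod R$ and $\{cm^{c-1}\}<1/(4H)$, whence by \eqref{mult} and \eqref{multfrac} $\lfloor hcm^{c-1}\rfloor=h\lfloor cm^{c-1}\rfloor\equiv h\pmod R$ and $h\{cm^{c-1}\}<1/4$; similarly $\lfloor h^\ell\gamma_c(\ell)m^{c-\ell}\rfloor\equiv 0\pmod R$ with $h^\ell\{\gamma_c(\ell)m^{c-\ell}\}<1/(4c)$ for $2\le\ell\le\ec$, and $r_c(m,h)<1/4$ from (i). The \emph{unscaled} fractional parts then sum to less than $1$ --- a budget independent of $R$ --- so \eqref{sumint} applies to the Taylor expansion \eqref{Taylor}, and adding the residues gives $r+h$ modulo $R$ directly. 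No relation between $R$ and $H$ is needed, and no wrap-around case arises, because one never requires $r+h$ to be a reduced residue. Patching your two points would close the argument; the paper's ordering simply makes the patches unnecessary.
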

\begin{proof}
Let $h$ be an integer in $[1, H]$. \\
By Relation ($i$), we have $0\le c^ch^{\ec+1}m^{\fc-1} < 1/4$, which implies $\{c^c h^{\ec+1}m^{\fc-1}\} < 1/4$.\\
By Lemma \ref{congfrac}, ($ii$) implies $\lfloor m^c\rfloor \equiv r \; (\mod R)$ and $\{m^c\} < 1/4$.\\
By Lemma \ref{congfrac}, ($iii$) implies $\lfloor cm^{c-1} \rfloor \equiv 1 \; (\mod R)$ and $\{cm^{c-1}\} <1/4H$ ; thanks to (\ref{multfrac}) and (\ref{mult}), we obtain $\lfloor h c m^{c-1} \rfloor \equiv h\lfloor c m^{c-1} \rfloor \equiv h \; (\mod R)$ and $h\{cm^{c-1}\}< 1/4$.\\
In a similar way, we obtain $\lfloor h \binom{c}{\ell} m^{c-\ell} \rfloor \equiv 0 \; (\mod R)$ as well as $h^{\ell}\{ \binom{c}{\ell} m^{c-\ell} \}<1/4c$.\\
Using Taylor's expansion (\ref{Taylor}) and (\ref{sumint}), we end the proof of Lemma{~}\ref{fracgame}.
\end{proof}

\subsection{The use of Erd\H{o}s-Tur\'{a}n-Koksma-Sz\"{u}sz inequailty}

In this subsection, we let $R$ be a (large) integer and
$$
\b = 2^{c+2}(c+2)^2/\dc, \; N = \lfloor R^{\b} /2 \rfloor,
$$
$$
s = \ec+1  \text{ and } x_n=\left(\frac{(N+n)^{c}}{R},\frac{c(N+n)^{c-1}}{R}, \ldots, \frac{\binom{c}{\ec}(N+n)^{\fc}}{R}\right).
$$
Our aim is to show the following
\begin{lem}\label{discrX}
As $R$ tends to infinity, we have 
\begin{equation}\label{ediscrX}
D_N(x_1, x_2, \ldots, x_N) = o_{c,H}\left(R^{c+1}\right).
\end{equation}
\end{lem}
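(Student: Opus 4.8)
The plan is to estimate $D_N$ by means of the Erd\H{o}s--Tur\'an--Koksma--Sz\"usz inequality (Lemma~\ref{ETKS}), applied with $s=\ec+1$ and with a truncation parameter $K$ which we take to be a fixed power of $R$, namely $K=\lfloor R^{c+2}\rfloor$. For a non-zero vector $k=(k_0,k_1,\dots,k_{\ec})\in\mathbb Z^{s}$ with $\|k\|_\infty\le K$, the definition of $x_n$ gives
\[
k\cdot x_n=\sum_{\ell=0}^{\ec}a_\ell\,(N+n)^{c-\ell},\qquad a_\ell:=\frac{k_\ell\,\gamma_c(\ell)}{R},
\]
whence, after re-indexing the summation,
\[
\sum_{n=1}^{N}\e(k\cdot x_n)=\sum_{n=N+1}^{2N}\e\left(\sum_{\ell=0}^{\ec}a_\ell\,n^{c-\ell}\right),
\]
which is exactly a trigonometric sum of the kind bounded in Lemma~\ref{upper}.

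The main point is to check that, once $R$ (hence $N=\lfloor R^{\b}/2\rfloor$) is large, this tuple $(a_0,\dots,a_{\ec})$ satisfies hypothesis (\ref{eup}) for every such $k$. The tuple is non-zero, since $k$ is and since $c\notin\mathbb N$ forces $\gamma_c(\ell)\neq0$ for all $\ell\in[0,\ec]$. If $k_\ell=0$ then $a_\ell=0$; if $k_\ell\neq0$, then, writing $\gamma_-=\min_{0\le\ell\le\ec}|\gamma_c(\ell)|>0$ and $\gamma_+=\max_{0\le\ell\le\ec}|\gamma_c(\ell)|$, we have $\gamma_-/R\le|a_\ell|\le K\gamma_+/R$. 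Since $N$ is of order $R^{\b}$ and $\b\dc/2=2^{c+1}(c+2)^2$ is a constant far larger than both $1$ and $c+2$, the inequalities $N^{-\dc/2}\le\gamma_-/R$ and $K\gamma_+/R\le N^{\dc/2}$ both hold for $R$ large; this is exactly what the value of $\b$ is tailored to guarantee. Hence Lemma~\ref{upper} applies uniformly in $k$ and yields
\[
\left|\frac1N\sum_{n=1}^{N}\e(k\cdot x_n)\right|\le B\,N^{-\theta},\qquad \theta=2^{-(c+2)}\dc.
\]

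Inserting this into (\ref{equETKS}) and bounding $\sum_{0<\|k\|_\infty\le K}r(k)^{-1}\le(3+2\log K)^{s}$, we get
\[
D_N(x_1,\dots,x_N)\le\left(\tfrac32\right)^{s}\left(\frac{2}{K+1}+(3+2\log K)^{s}\,B\,N^{-\theta}\right).
\]
With $K=\lfloor R^{c+2}\rfloor$ the first term is $\ll R^{-(c+2)}$; for the second, using that $N$ is of order $R^{\b}$ together with the identity $\b\theta=(c+2)^2$, one has $N^{-\theta}$ of order $R^{-(c+2)^2}$, so that term is $\ll_c(\log R)^{s}R^{-(c+2)^2}$. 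Altogether $D_N(x_1,\dots,x_N)\ll_{c,H}R^{-(c+2)}(\log R)^{\ec+1}$, which is $o_{c,H}(R^{-(c+1)})$ and in particular establishes (\ref{ediscrX}).

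I expect the main obstacle to be the uniform verification of (\ref{eup}) across all admissible $k$: the exponent $\b$ must be chosen simultaneously large enough that $\b\dc/2$ keeps the two size constraints on $|a_\ell|$ valid for every such $k$ and all large $R$, and large enough that $\b\theta=(c+2)^2$ lets the saving $N^{-\theta}$ beat the target exponent $c+1$ even after the logarithmic loss from $r(k)^{-1}$ and the choice of $K$. Everything else reduces to routine estimates with constants depending only on $c$ and $H$.
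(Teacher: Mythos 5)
Your proof is correct and follows essentially the same route as the paper: apply the Erd\H{o}s--Tur\'an--Koksma--Sz\"usz inequality with $s=\ec+1$, truncation parameter $K\asymp R^{c+2}$ (the paper writes $K=\lceil N^{\theta/(c+2)}\rceil$, but since $\beta\theta=(c+2)^2$ this is of the same order as your $\lfloor R^{c+2}\rfloor$), verify hypothesis (\ref{eup}) using that $N\asymp R^{\beta}$ with $\beta$ chosen large enough, and invoke Lemma~\ref{upper}; the paper even uses the cruder bound $r(k)\ge1$ rather than your harmonic-sum estimate, yet both arguments land on $D_N(X)\ll_{c,H}R^{-(c+2)}$, comfortably beating the target. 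Note that (\ref{ediscrX}) as printed must be a sign typo for $o_{c,H}(R^{-(c+1)})$ (or $o_{c,H}(R^{-\ec-1})$, which is what the application in the next subsection actually requires), and you correctly read it that way; your final line slightly overstates the bound by tacking a $(\log R)^{\ec+1}$ onto the dominant $R^{-(c+2)}$ term, but the sum of your two displayed terms is in fact $\ll R^{-(c+2)}$, so the conclusion stands.
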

We use ETKSz inequality, with
\begin{equation}\notag%\label{defK}
K = \lceil N^{\theta/(c+2)} \rceil, \text{ where $\theta = 2^{-(c+2)}\dc$ (cf. Lemma \ref{upper})}.
\end{equation}
Up to a slight change of notation (the indices of the components of $k$ running from $0$ to $\ec$ and the indices of the trigonometric sums running from $N+1$ to $2N$), the trigonometric sums we have to consider are
$$
S(N; k, R) = \sum_{n=N+1}^{2N}\e\left(R^{-1} \sum_{\ell = 0}^{\ec}k_{\ell}\binom{c}{\ell}n^{c-\ell}\right),
$$
which are of the type studied in Lemma \ref{upper}, with $a_{\ell} = R^{-1}k_{\ell}\binom{c}{\ell}$. We have $1 \le \binom{c}{\ell} \ll_c(1)$ and thus
\begin{equation}\notag%\label{al}
\text{either } a_{\ell} =0 \text{ or } R^{-1} \le |a_{\ell} | \ll_c R^{-1} K.
\end{equation}
When $a_\ell \neq 0$, we have 
$$
|a_{\ell}| \ll_c R^{-1}K \ll_c K \ll_c N^{\theta/(c+2)} \; \text{ and }\; |a_{\ell}^{-1}| \le R \ll_c N^{\theta/(c+2)^2}.
$$
If $R$ is sufficiently large, we can use Lemma \ref{upper} and obtain
$$
\forall k \text{ with } 0 <  \|k\|_{\infty} \le K \, \colon \, S(N;k,R) \le N^{1-\theta}.
$$
We can now apply Lemma \ref{ETKS}, with the most trivial bound $r(k) \ge{~}1$. We obtain, when $R$, or equivalently $N$, is large enough
$$
D_N(X) \ll_c K^{-1} + K^{\ec +1}N^{-\theta} \ll_c  N^{-\theta/(c+2)}.
$$

\subsection{End of the proof of Theorem \ref{tlinpol}}
We first prove that, when $R$ is sufficiently large, there exists an integer $m \in [N+1, 2N]$ which satisfies Relations ($ii$), ($iii$) and ($iv$) of Lemma \ref{fracgame}. For doing so, we wish to find an element $x_m$ in $X$ which belongs to an interval $I$ of Lebesgue measure
$$
\lambda(I) = \frac{1}{4R} \times \frac{1}{4R} \times \prod_{\ell=2}^{\ec} \frac{1}{4cRH^{\ell}} \gg_{c, H} R^{-\ec -1}.
$$
By the definition of the discrepancy recalled in (\ref{defdiscr}), any interval in $[0, 1)^s$ with Lebesgue measure larger than $D_N(X)$ contains an element from $X$. We have
$$
(\lambda(I))^{-1} \ll_{c,H} R^{{\ec+1}} \ll_{c,H} N^{\theta (\ec+1)/(c+2)^2}  =o_{c,H}(D_N(X)).
$$
Thus, if $N$ is large enough, there exists $m$ in $[N+1, 2N]$ which satisfies Relations ($ii$), ($iii$) and ($iv$) of Lemma \ref{fracgame}. To complete the proof of Theorem \ref{tlinpol}, it is enough to notice that when $N$ is large enough, the integer $m$ we have produced satisfies also Relation ($i$) and to apply Lemma \ref{fracgame}.
$\qed$

\section{Proof of Theorem \ref{mainthm}}\label{smain}
We follow a similar strategy to that of \cite{DL}: for an integer $H$ arbitrary large, we construct an integer $m$ such that
\begin{equation}\label{fundam}
\forall h \in [1, H] \;\colon \; \frac{1}{H} \le \frac{\f(\lfloor (m+h)^c \rfloor)}{\lfloor (m+h)^c \rfloor)} \le \frac{3}{H}.
\end{equation}
Theorem \ref{mainthm} easily follows from (\ref{fundam}).\\

Without loss of generality, we may assume that $H \ge 21$ which implies that for any $h$ in $[1, H]$, one has $\frac{\f(h)}{h} > \frac{3}{H}$ (notice that $\f(h)$ is always at least $1$ and is larger than $3$ for $h \ge H/3\ge 7$. Since  $\f(p)/p$ tends to $1$ when the prime $p$ tends to infinity and the infinite product of those terms diverges to $0$ (by Euler), we can find finite families of prime numbers $\left(\mathcal{P}_h\right)_n$ such that
\begin{enumerate}[label=(\roman*)]
\item for $1 \le h < k \le H$ the sets $\mathcal{P}_h$ and $\mathcal{P}_k$ are disjoint,
\item for any $h$ all the elements of $\mathcal{P}_h$ are larger than $H$,
\item for any $h \; \colon \; 2/H  \le (\f(h)/h)(\f(P_h)/P_h) \le 3/H$,
\end{enumerate}
where $P_h$ denotes the product of the primes in $\mathcal{P}_h$.\\

We now let $L$ be a prime number which is larger than all the elements of all the $\mathcal{P}_h$ and we let
$$
R= H! \prod_{H<p\le L} p.
$$
By the Chinese Remainder Theorem, we can find a residue $r$ modulo $R$ satisfying
\begin{eqnarray*}
&r \equiv -1 \; (\mod P_1)\\
&\cdots \\
&r \equiv -H \; (\mod P_H)\\
&r \equiv 0 \; (\mod R/(P_1P_2 \cdots P_H)).
\end{eqnarray*}

By Theorem \ref{tlinpol}, if $R$ is large enough, i.e. if $L$ is large enough, there exists $m \le R^{\b}$ such that 
$$
\forall h \in [1,H] \; \colon \; \lfloor (m+h)^c\rfloor \equiv r+h \; (\mod R)
$$
and thus $\gcd(\lfloor (m+h)^c \rfloor, R) = hP_h$. In other words, $\lfloor (m+h)^c \rfloor $ is an integer which is less than $R^{\b c +1}$, divisible by $hP_h$ and such that any of its prime factor which does not divide $hP_h$ is larger than $L$. We thus have, using Relation (iii) above
$$
\frac{2}{H} \prod_{\substack{p |\lfloor (m+h)^c \rfloor \\ p > L}}  \frac{\f(p)}{p}\le \f(\lfloor (m+h)^c\rfloor)/\lfloor (m+h)^c \rfloor) \le \frac{3}{H}.
$$
In order to prove (\ref{fundam}), and thus Theorem  \ref{mainthm}, it is enough to prove that for $L$ large enough, we have

\begin{equation}\label{ouf}
 \prod_{\substack{p |\lfloor (m+h)^c \rfloor \\ p > L}}  \left(1-\frac{1}{p}\right) \ge 1/2.
\end{equation}

By the definition of $R$ and the prime number theorem, we have, when $L$ is large enough
\begin{align*}
\log \lfloor (m+h)^c \rfloor &\le (\b c +1) \left(\sum_{p \le L} \log p+ \log H!\right) \\
&\le (\b c+2) L.
\end{align*}
When $L$ is large enough, and thus $m$ too, we have
\begin{alignat*}{2}
 \prod_{\substack{p |\lfloor (m+h)^c \rfloor \\ p > L}}  \left(1-\frac{1}{p}\right) &\ge  \prod_{\substack{p |\lfloor (m+h)^c \rfloor \\ p >((\b c +2)^{-1} \log\lfloor (m+h)^c \rfloor }}  \left(1-\frac{1}{p}\right) &\text{by the previous inequality}\\
 &\ge  \prod_{\substack{p |\lfloor (m+h)^c \rfloor \\ p >(\log\lfloor (m+h)^c \rfloor)^{3/4} }}  \left(1-\frac{1}{p}\right)  &\text{since $m$ is large enough}\\
 &\ge 1/2 &\text{by Lemma \ref{largep}}.
\end{alignat*}
Thus, (\ref{ouf}) is proved, as well as Theorem \ref{mainthm}.

\section{Remarks and complements}

We see this paper as an archetype of the more general study of the distribution modulo 1 of mean vaues of sequences $(f(a(n)))_n$, where $f$ is a regular multiplicative function with constant mean value and $a$ is a function in a Hardy field with polynomial growth.\\

In particular, it is challenging to determine the values of $c$ for which the sequence $\left(\sum_{m\leq n} \f(\lfloor m^c \rfloor) / \lfloor m^c \rfloor \right)_{n\ge 1}$ is uniformly distributed modulo $1$.

\section{Acknowledgement}

The first-named author has benefitted from the support of the joint FWF-ANR project Arithrand: FWF: I 4945-N and ANR-20-CE91-0006.\\

\end{document}